\title{$G$-gerbes on perfectoid spaces}
\author{Xiaohuan Long}
\address{(X. Long) Academy of Mathematics and Systems Science, Chinese Academy of Sciences, Beijing 100190, China.}
\email{longxiaohuan@amss.ac.cn}
\author{Yibin Wang}
\address{(Y. Wang) School of Mathematical Sciences, Peking University, Beijing, China.}
\email{2200010606@stu.pku.edu.cn}
\author{Xiangdong Wu}
\address{(X. Wu) Academy of Mathematics and Systems Science, Chinese Academy of Sciences, Beijing 100190, China.}
\email{wuxiangdong@amss.ac.cn}
\author{Ru Yi}
\address{(R. Yi) School of Mathematical Sciences, Fudan University, Shanghai, China.}
\email{22300180148@m.fudan.edu.cn}
\date{\today}
\begin{document}
\begin{abstract}
    Let $K$ be a complete non-archimedean field over $\mathbb{Q}_p$, $G$ be a rigid group over $K$, and $X$ be a perfectoid space over $K$. We consider the natural morphism of sites $\nu\colon X_v \to X_\et$. It is known from work of Heuer that the direct image functor $\nu_*$ induces an equivalence of the categories of $G$-torsors. In this article, we show that there is an equivalence of 2-categories of $G$-gerbes on these two topologies.
\end{abstract}
\maketitle


\section{Introduction}
We fix a non-archimedean field $K/\bbQ_p$. Let $X/K$ be a perfectoid space, $G/K$ a rigid group. Then we have a natural morphism of sites $\nu\colon X_v\to X_{\et}$. Consider the diamond associated to $G$
\[G^\diamondsuit\colon Y\mapsto \Hom_K(Y,G)\]
defined on $\Perfd_K$, the category of perfectoid spaces over $K$. Let $G_v$ and $G_\et$ denote the restrictions of $G^\dia$ on $X_v$ and $X_\et$. By definition, $G_\et=\nu_*G_v$. 

Heuer in \cite{Heu22} shows that each $G_v$-torsor is \'etale locally trivial. Hence the functor 
\[
    \nu_*\colon \Tors(X_v;G_v) \to \Tors(X_\et;G_\et)
\]
is well-defined, and it admits an obvious quasi-inverse as a composite of $\nu^*$ followed by the contracted product:
\[
    \Tors(X_\et;G_\et) \xrightarrow{\nu^*} \Tors(X_v; \nu^*G_\et) \xrightarrow{- \wedge^{\nu^*G_\et} G_v} \Tors(X_v; G_v).
\]
In other words, $\nu_*$ induces an equivalence of \'etale stacks
\begin{equation}\label{eq: BG}
    \nu_*\bbB G_v\xrightarrow{\sim} \bbB G_\et.
\end{equation}

In this article, we establish an analogous equivalence for $G$-gerbes, where a $G$-gerbe is a stack of groupoid that is locally isomorphic to $\mathbb{B}G$ with trivial band. More precisely, we obtain the following.

\begin{theo}\label{thm: MainTheoremGerbe}
    The functor $\nu_*$ induces an equivalence of 2-categories
    \[
        \nu_*\colon \Gerb(X_v; G_v) \to \Gerb(X_\et; G_\et).
    \]
\end{theo}
\begin{rema}
    The perfectoidness of $X$ is necessary. See Remark \ref{rema: perfectoidAssumtionNecessary}.
\end{rema}

We restrict our focus on $1$-gerbes in the sense of Giraud, since for $n \ge 2$, $n$-gerbes on a site $E$ are banded by abelian groups, and for any abelian group $A$ on $E$, $n$-$A$-gerbes on $E$ are classified by abelian cohomology groups $H^{n+1}(E,A)$. (See \cite[Corollary 7.2.2.27]{HTT}).

Recall that Heuer in \cite{Heu22} shows that for any adic space over $K$, the categories of $G$-torsors on $X_v$ and $X_{\mathrm{qpro\et}}$ are equivalent, and if moreover $X$ is locally Noetherian, then the categories of $G$-torsors on $X_v$ and $X_{\mathrm{pro\et}}$ are equivalent. We also obtain the following analogue results:
\begin{coro}
    Let $X$ be any adic space over $K$, then the categories of $G$-gerbes on $X_v$ and $X_{\mathrm{qpro\et}}$ (and $X_{\mathrm{pro\et}}$ if $X$ is locally Noetherian) are equivalent.
\end{coro}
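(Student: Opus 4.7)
The plan is to carry out the proof of Theorem~\ref{thm: MainTheoremGerbe} essentially verbatim, with qpro-étale (resp.\ pro-étale) replacing étale and the adic assumption replacing perfectoid throughout. The only external ingredient used in the passage from \eqref{eq: BG} to Theorem~\ref{thm: MainTheoremGerbe} is the torsor equivalence $\nu_* \colon \Tors(X_v;G_v) \xrightarrow{\sim} \Tors(X_\et;G_\et)$ on a perfectoid space. Heuer establishes the analogous equivalence between $X_v$ and $X_{\mathrm{qpro\et}}$ for an \emph{arbitrary} adic space $X/K$, and between $X_v$ and $X_{\mathrm{pro\et}}$ when $X$ is moreover locally Noetherian. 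Feeding this stronger torsor input into the proof of Theorem~\ref{thm: MainTheoremGerbe} should yield the corollary with almost no change.

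Concretely, I would proceed as follows. First, check that $\nu_*$ is well-defined as a functor $\Gerb(X_v;G_v) \to \Gerb(X_{\mathrm{qpro\et}};G_{\mathrm{qpro\et}})$: given a $G_v$-gerbe $\mathcal{G}$, qpro-étale local triviality of $\nu_*\mathcal{G}$ follows by applying Heuer's torsor equivalence to the $\mathrm{Isom}$-torsors between $\mathcal{G}$ and $\bbB G_v$ over a v-local trivialization. Next, construct the quasi-inverse as the composite
\[
    \Gerb(X_{\mathrm{qpro\et}};G_{\mathrm{qpro\et}}) \xrightarrow{\nu^*} \Gerb(X_v;\nu^*G_{\mathrm{qpro\et}}) \xrightarrow{-\wedge^{\nu^*G_{\mathrm{qpro\et}}} G_v} \Gerb(X_v;G_v),
\]
exactly paralleling the torsor-level construction in \eqref{eq: BG}. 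Verifying that the unit and counit of this adjunction are 2-equivalences then reduces, at the level of objects, 1-morphisms, and 2-morphisms, to the corresponding statement for torsors, which is Heuer's theorem.

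The step I expect to be the main obstacle is the 2-categorical bookkeeping: one must check that the $\mathrm{Isom}$-sheaves between $G$-gerbes, the natural transformations between them, and the gluing data all descend correctly under $\nu$, so that the gerbe-level equivalence truly follows from the torsor-level one. This is essentially what is proved in Theorem~\ref{thm: MainTheoremGerbe}, now carried out in slightly greater generality; once that argument is in place, no new geometric or cohomological input is required. The pro-étale case for locally Noetherian $X$ is handled identically by invoking Heuer's pro-étale torsor equivalence in place of the qpro-étale one.
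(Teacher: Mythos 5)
Your overall strategy of ``rerun the proof of Theorem~\ref{thm: MainTheoremGerbe} with $X_{\mathrm{qpro\et}}$ (resp.\ $X_{\mathrm{pro\et}}$) in place of $X_\et$'' is the right idea, but the specific mechanism you propose for the equivalence is exactly the one the paper explicitly rules out. You construct a quasi-inverse as $\nu^*$ followed by the contracted product $-\wedge^{\nu^*G_{\mathrm{qpro\et}}} G_v$, ``exactly paralleling the torsor-level construction.'' The paper's remark after the sketch of Theorem~\ref{thm: MainTheoremGerbe} states that one \emph{cannot} define a quasi-inverse of $\nu_*$ this way, because non-abelian $H^2$ lacks functoriality: unlike for torsors, a homomorphism of group sheaves $u\colon H \to G$ does not induce a change-of-group $2$-functor $\Gerb(E;H)\to\Gerb(E;G)$, since the class of a gerbe lives in cohomology of its \emph{band} and there is no well-defined pushforward of bands and gerbe classes along $u$ in general. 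The actual argument avoids any quasi-inverse: fully faithfulness of $\nu_*$ is deduced from the identification of the centres ($Z_v$, resp.\ the pushforward of $Z$, as the centre of $G_v$, resp.\ of the pushed-forward group, Theorem~\ref{thm: Centres} and Theorem~\ref{theo: comparisonOfGerbes}), because $\SHom$-stacks between $G$-gerbes are $Z(G)$-gerbes; and essential surjectivity is obtained by combining local essential surjectivity (from $R^1\nu_*G_v=1$, i.e.\ \eqref{eq: BG}) with $2$-descent for gerbes.

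A second, smaller gap: for well-definedness you claim that qpro-étale local triviality of a $G_v$-gerbe ``follows by applying Heuer's torsor equivalence to the $\mathrm{Isom}$-torsors.'' Local triviality of a gerbe on a coarser topology is an $H^2$-level statement, not an $H^1$-level one: descending a $v$-local trivialization to the qpro-étale site requires Giraud's criterion (Proposition~\ref{prop: GiraudResultOfCentre}), namely surjectivity of $H^2$ with coefficients in the centre, which in turn rests on Heuer's \emph{abelian} vanishing $R^i\nu_*Z_v=0$ for $i\ge 1$ applied to the commutative rigid group $Z$, not merely on the torsor equivalence for $G$. To repair the proposal, replace your second and third steps by: (i) verify the hypotheses of Theorem~\ref{theo: comparisonOfGerbes} for the morphism of sites $X_v\to X_{\mathrm{qpro\et}}$ (resp.\ $X_v\to X_{\mathrm{pro\et}}$ in the locally Noetherian case), i.e.\ $R^1\nu_*G_v=1$, $R^1\nu_*Z_v=R^2\nu_*Z_v=0$, and the compatibility of centres as in Section~\ref{ssec: centreCompatibilityEtaleAndV}; (ii) invoke Proposition~\ref{prop: GiraudResultOfCentre} for local triviality and then Theorem~\ref{theo: comparisonOfGerbes} for the equivalence.
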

\subsection{Reduction via the centre}
These results are crucially depended on the centres of $G_v$ and $G_\et$.

\begin{theo}\label{thm: Centres}
    Let $G$ be a rigid group over $K$. Then
    \begin{enumerate}[(1)]
        \item there is a Zariski-closed rigid subgroup $Z_\rig \subset G$ such that $Z_\rig^\dia$ is the centre of $G^\dia$;
        \item the restrictions of $Z_\rig^\dia$ on $X_v$ and $X_\et$ are the centres of $G_v$ and $G_\et$ respectively.
    \end{enumerate}
\end{theo}
Let us briefly deduce Theorem \ref{thm: MainTheoremGerbe} from Theorem \ref{thm: Centres}.

By a result of Giraud \cite[proposition 3.1.10]{Gir71}, every $G_v$-gerbe on $X_v$ is \'etale locally trivial. Combining with Heuer's main result, we find that $\nu_*$ is well-defined. The fully faithfulness is a consequence of Theorem \ref{thm: Centres}. Note that by \eqref{eq: BG}, $\nu_*$ is \emph{\'etale locally essentially surjective}, namely, any $G_\et$-gerbe on $X_\et$ is \'etale locally of the form $\nu_*\bbB G_v$. We deduce that $\nu_*$ is essentially surjective by $2$-descent.

\begin{rema}
    We \emph{can not} define a quasi-inverse of $\nu_*$ in a similar way as for torsors, since non-abelian $H^2$ lacks functoriality.
\end{rema}

\subsection{Setup and Notation}
Throughout let $p$ be a prime and $K$ a complete non-archimedean field over $\mathbb{Q}_p$. We put $\Spa K \coloneqq \Spa(K,K^{\circ})$. For any finite extension $L$ of $K$, we simply write $L$ for the Huber pair $(L,L^{\circ})$, and write $\Spa L$ for $\Spa(L,L^\circ)$ if there is no risk of confusion. By an adic space over $K$ we mean an adic space over $\Spa(K,K^\circ)$. By a rigid space over $K$ we mean an adic space locally of topologically finite type over $\Spa(K,K^\circ)$.

\subsection*{Acknowledgements.} 
This work is one of the projects in the “Algebra and Number Theory" summer school jointly organized by Chinese Academy of Sciences and Peking University. We want to express our gratitude to Professor Shou-wu Zhang and Professor Weizhe Zheng. Without their efforts, the summer school would not have been such a success. We are also grateful to our supervisor, Professor Ning Guo, for suggesting this question and continuous guidance. We thank Professor Ben Heuer, Professor Xu Shen and Professor Weizhe Zheng for very useful conversations and helpful suggestions.



\section{Centres of rigid groups}
\indent In this section, we start by recalling some background on rigid analytic group varieties. Then we will define and construct the centre of a rigid group.

Let $\overline{K}$ be an algebraic closure of $K$.

\subsection{Background on rigid groups}
\begin{defi}
    By a \emph{rigid analytic group variety}, or just a \emph{rigid group}, we mean a group object $G$ in the category of adic spaces locally of topologically finite type over $\Spa K$.
\end{defi}

We refer to \cite{Far19} for some background on rigid groups. We write the group operation multiplicatively as $m\colon G\times G\rightarrow G$, and write $1\in G$ for the identity. Since $K$ is of characteristic $0$, we have the following.
\begin{lemm}[{\cite[proposition 1]{Far19}}]\label{lemm: smoothnessOfRigidGroup}
    Any rigid group $G$ is smooth. Moreover, there is a rigid open subspace $1\in U\subseteq G$ for which there is an isomorphism $U\xrightarrow{\sim}\mathbb{B}^d$ of rigid spaces.
\end{lemm}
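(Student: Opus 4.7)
\medskip

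My plan is to prove smoothness by reducing to smoothness at the identity via the group structure, then to establish the polydisc neighbourhood via the $p$-adic exponential map. I will proceed in three steps.

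First, I would reduce smoothness of $G$ to smoothness of $G$ at the identity point $1\in G$. For any classical point $g\in G(\overline{K})$, left translation $L_g\colon G_{\overline{K}} \to G_{\overline{K}}$ is an isomorphism of rigid spaces over $\overline{K}$ sending $1$ to $g$. Hence the stalks of $\Omega^1_{G/K}$ at $1$ and at $g$ are isomorphic after base change, and smoothness is a condition that can be checked on classical points after extending scalars to $\overline{K}$ (using that $G$ is of topologically finite type). So it suffices to show $G$ is smooth at $1$.

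Second, I would analyse the completed local ring $\widehat{\mathcal{O}}_{G,1}$. The multiplication and inversion morphisms endow this completion with the structure of a formal group law over $K$ of dimension $d := \dim_K T_1 G$, the Lie algebra. Since $\mathrm{char}(K) = 0$, any such formal group law is isomorphic (over $K$) to its additive model via the formal exponential: if $X_1,\dots,X_d$ are formal coordinates at $1$, then one constructs inductively a formal change of variables trivialising the group law, and in characteristic $0$ the resulting power series have no denominator obstruction at the level of \emph{formal} series. This gives $\widehat{\mathcal{O}}_{G,1} \cong K[\![Y_1,\dots,Y_d]\!]$, which implies $G$ is smooth at $1$ of dimension $d$.

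Third, to upgrade the formal isomorphism to a rigid-analytic isomorphism on an open neighbourhood of $1$, I would invoke the fact that over $K/\mathbb{Q}_p$ the formal exponential and logarithm of a $p$-adic Lie/formal group converge on some open polydisc of positive radius: the denominators that appear in the coefficients of $\exp$ are controlled by $p$-adic valuations of factorials, which give a definite, nonzero radius of convergence. Concretely, by rescaling the coordinates $Y_1,\dots,Y_d$ by a sufficiently small element of $K^\times$, one obtains convergent Tate power series defining mutually inverse rigid-analytic morphisms between a small polydisc $\mathbb{B}^d$ and an open neighbourhood $U\ni 1$ in $G$. This produces the desired isomorphism $U\xrightarrow{\sim}\mathbb{B}^d$.

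The main obstacle is the third step: passing from a formal isomorphism in $K[\![Y_1,\dots,Y_d]\!]$ to a genuine rigid-analytic isomorphism on a small polydisc. This relies essentially on the characteristic-$0$ assumption through the $p$-adic convergence estimates on $\exp$ and $\log$; in positive characteristic the argument breaks down, and indeed the lemma can fail there. Once convergence is established, checking that $\exp$ and $\log$ are mutually inverse on a small enough polydisc is routine from the formal identities.
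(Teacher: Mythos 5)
The paper offers no proof of this lemma: it is quoted verbatim from Fargues, so your attempt should be measured against the argument there. For the smoothness assertion your strategy coincides with the standard one (and with Fargues's): reduce to the identity by translation, using that the non-smooth locus is Zariski-closed and that affinoid algebras are Jacobson, and then apply Cartier's theorem to $\widehat{\mathcal{O}}_{G,1}$. Two points in your Step 2 need repair, though. First, there is a presentational circularity: you cannot speak of ``formal coordinates $X_1,\dots,X_d$'' or of a ``formal group law'' on $\widehat{\mathcal{O}}_{G,1}$ before knowing that this ring is a power series ring --- that is precisely the conclusion. What you have a priori is a complete local Noetherian $K$-algebra with a cogroup structure, and the characteristic-$0$ argument (Cartier) proceeds by producing left-invariant derivations $D_1,\dots,D_d$ dual to a basis of $\mathfrak{m}/\mathfrak{m}^2$ and applying divided-power operators $D^{\alpha}/\alpha!$ to show that $\operatorname{gr}_{\mathfrak{m}}$ is a polynomial ring; it is not literally a ``change of variables trivialising the group law.'' Second, for non-commutative $G$ the formal group is \emph{not} isomorphic to its additive model (the additive model is commutative); the exponential only identifies the underlying pointed formal scheme with that of $\mathfrak{g}$, the group law becoming Baker--Campbell--Hausdorff. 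Since the lemma asks only for an isomorphism of rigid spaces this suffices, but as stated your claim is false.

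Your Step 3 is where you genuinely diverge, and it is the heaviest part of your route. The coordinates produced by Cartier's theorem live in $\widehat{\mathcal{O}}_{G,1}$ and are a priori only formal, so you must prove that the formal change of variables is given by convergent power series; for non-commutative $G$ the relevant estimates are those for the BCH series and the exponential of a $p$-adic Lie algebra, which converge only on a polydisc of radius roughly $|p|^{1/(p-1)}$ and are not ``controlled by factorials'' in the naive one-variable sense. This can be pushed through (it is how one shows $p$-adic Lie groups admit standard open subgroups), but it is substantially more work than you indicate, and in the logical architecture of the present paper it risks circularity, since the Lie correspondence (Theorem~\ref{theo: LieAlgebraRigidGroup}) is itself built on this lemma. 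The standard and much lighter route, once smoothness at the rational point $1$ is known, is to take local coordinates $x_1,\dots,x_d$ cutting out $1$ and apply the rigid-analytic inverse function theorem (equivalently: an \'etale map to $\mathbb{B}^d$ inducing a trivial residue field extension at $1$ is a local isomorphism). This needs no group structure and no convergence estimates, and is what the cited source does. So: same skeleton for smoothness, with the two fixes above; for the polydisc statement I would replace the exponential by the inverse function theorem.
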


In particular, any rigid group $G$ over $K$ is reduced.

\begin{rema}
    On locally strongly Noetherian adic spaces, there is a robust theory of coherent $\cO_X$-modules and Zariski-closed immersions. See \cite[Appendix B]{Zav24} for a detailed treatment.
\end{rema}
\begin{rema}
    Any rigid group $G$ over $K$ is separated, as the identity map $e\colon \Spa K \to G$ is a Zariski-closed immersion. See \cite[remarque 1]{Far19}.
\end{rema}

\begin{prop}\label{prop: ZariskiClosedisClosed}
    Let $i\colon Z \to X$ be a Zariski-closed immersion of locally strongly Noetherian analytic adic spaces. Then $i^{\diamondsuit}\colon Z^{\diamondsuit} \to X^{\diamondsuit}$ is a closed immersion of diamonds.
\end{prop}
\begin{proof}
    By \cite[Proposition 10.11 (i)]{Sch22}, the condition of closed immersions can be checked $v$-locally on the target. We may assume $X$ is affinoid. By \cite[Lemma 5.2]{Zav25_foundation}, for any affinoid perfectoid space $Y$, the map $Z^{\diamondsuit} \times_{X^{\diamondsuit}} Y^{\diamondsuit} \to Y^{\diamondsuit}$ is represented by a Zariski-closed subset in the sense of \cite[Definition 5.7]{Sch22}. The assertion follows.
\end{proof}

\subsection{Definition and construction of centres}
\begin{defi}
    For any rigid space $X$ over $K$, we put
    \[
        X(\overline{K}) \coloneqq \bigcup_{L \subset \overline{K}\text{, $L/K$ finite}} X(L).
    \]
\end{defi}

\begin{defi}
    Let $G$ be a rigid group over $K$. The \emph{centre} of $G$ is a rigid Zariski-closed subgroup $Z$ of $G$ such that for any finite extension $L/K$, the set $Z(L)$ of $L$-points of $Z$ consists of $g\in G(L)$ which acts trivially on $G_L\coloneqq G\times_{\Spa(K)}\Spa(L)$ by conjugation. 
\end{defi}
\begin{rema}
    The centre of $G$ is unique if it exists. Suppose $Z$ and $Z'$ are centres of $G$. Consider projections $Z \times_G Z' \to Z$ and $Z \times_G Z' \to Z'$. These two maps are isomorphisms on $L$-points for any finite extension $L$ of $K$, hence are isomorphisms since $Z,Z'$ and $Z \times_G Z'$ are rigid groups, and in particular separated and reduced.
\end{rema}

\begin{prop}\label{prop: centreIsRigidZariskiClosedSubgroup}
    Let $G$ be a rigid group over $K$. Then the centre $Z$ of $G$ exists.
\end{prop}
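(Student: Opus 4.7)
My plan is to construct $Z$ out of two ingredients: the kernel of the adjoint representation (to handle the identity component) and the centralisers of a chosen set of component representatives (to handle the remaining components), combined via Galois descent from the algebraic closure.

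Let $N := \ker(\ad) \subseteq G$; this is Zariski-closed as the kernel of the rigid-group homomorphism $\ad \colon G \to \GL(\mathfrak{g})$. I first claim that $N$ represents the functor of elements centralising $G^0$. Given $g \in N(T)$, the conjugation $c_g \colon G_T \to G_T$ is a rigid-group homomorphism with $\Lie(c_g) = \ad(g) = \mathrm{id} = \Lie(\mathrm{id}_{G_T})$; by the equivalence in Theorem \ref{theo: LieAlgebraRigidGroup} ($\Lie$ being fully faithful after localising at open-immersion homomorphisms), $c_g$ and $\mathrm{id}_{G_T}$ must agree on some rigid open subgroup $U \subseteq G_T$. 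The equaliser of two group homomorphisms is itself a subgroup, so it contains $U$; since any open subgroup of a rigid group is automatically closed (its complement being a disjoint union of open cosets), connectedness of $G^0$ forces this equaliser to contain $G^0_T$, and hence $c_g|_{G^0_T} = \mathrm{id}$. Conversely, if $c_g$ is trivial on $G^0$, then taking $\Lie$ yields $\ad(g) = \mathrm{id}$, so $g \in N$.

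Next, pick an algebraic closure $\bar K/K$ and a representative $g_\alpha \in G(\bar K)$ in each component of $G_{\bar K}$. Each centraliser $C(g_\alpha) := \{h \in G_{\bar K} : h g_\alpha h^{-1} g_\alpha^{-1} = 1\}$ is Zariski-closed, being the fibre over $1$ of a commutator morphism. I set
\[
    Z_{\bar K} := N_{\bar K} \cap \bigcap_\alpha C(g_\alpha) \subseteq G_{\bar K}.
\]
On any affinoid chart this a priori infinite intersection collapses to a finite one by Noetherianity of the affinoid algebra, so $Z_{\bar K}$ is Zariski-closed. Because $G_{\bar K}$ is generated as a group by $G^0_{\bar K}$ together with the $g_\alpha$'s, any $F$-point of $Z_{\bar K}$ centralises all of $G_{\bar K}(F)$; conversely, every central element of $G_{\bar K}(F)$ lies in $Z_{\bar K}$. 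The construction is moreover independent of the choice of representatives (two choices differ by an element of $G^0$, which is already centralised by $N$), so $Z_{\bar K}$ is $\mathrm{Gal}(\bar K/K)$-stable and descends to the desired Zariski-closed subgroup $Z \subseteq G$ over $K$.

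The step I expect to be the main obstacle is upgrading the infinitesimal condition $\ad(g) = \mathrm{id}$ to the honest identity $c_g|_{G^0} = \mathrm{id}$; this relies on combining the local form of the Lie-algebra correspondence with the topological-but-essential fact that a connected rigid group admits no proper open subgroup. A secondary subtlety is that $\underline{\pi}_0(G)$ may be infinite, handled via Noetherianity of the affinoid algebras chart by chart.
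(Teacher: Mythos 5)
Your construction is essentially the paper's: the kernel of $\ad$ to control the identity component (via the Lie-algebra correspondence of Theorem \ref{theo: LieAlgebraRigidGroup}, plus the fact that a Zariski-closed subgroup containing a non-empty open subgroup contains $G^0$), then centralisers of component representatives, then Galois descent, with local finiteness of the intersection secured by Noetherianity. Your treatment of the key step $\ad(g)=\mathrm{id}\Rightarrow \Inn_g|_{G^0}=\mathrm{id}$ is in fact stated more cleanly than in the paper (which phrases it via ``$\Inn_h$ restricts to an isomorphism $U\to V$'' rather than via full faithfulness of $\Lie$ in the localised category); just note that the cited theorem is a statement about rigid groups over a complete non-archimedean \emph{field}, so this step should be run for $T=\Spa(L)$ with $L/K$ a complete field extension --- which is all the definition of the centre requires --- rather than for arbitrary adic $T$ as your ``represents the functor'' claim suggests.

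The one genuine gap is the descent step. You form $Z_{\overline{K}}$ over the algebraic closure and invoke $\mathrm{Gal}(\overline{K}/K)$-descent in one shot. But $\overline{K}$ is not complete, so $G_{\overline{K}}$ is not an honest rigid space (one would have to pass to $\widehat{\overline{K}}$), and descent of Zariski-closed subspaces along the infinite extension $\widehat{\overline{K}}/K$ is not the classical finite \'etale descent of de Jong that the paper cites --- effectivity of such a descent datum needs a separate approximation argument. The paper avoids this by working one finite Galois extension $L/K$ at a time: it descends each $N_L=H_L\cap\bigcap_i\Ker(\Inn_{g_i})$ to a Zariski-closed subgroup $Z_L\subseteq G$ over $K$ by finite \'etale descent, checks functoriality in $L$, and only then takes the (locally finite, by Noetherianity) intersection $Z=\bigcap_L Z_L$ \emph{over $K$}. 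Your argument is repaired by exactly this reorganisation; as written, the final descent assertion is not justified by the tools available.
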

\begin{proof}
    Let $L/K$ be a finite Galois extension inside $\overline{K}$. For any $g \in G(L)$, we let
    \[
        \Inn_g\colon G_L \to G_L
    \]
    be the inner action on $G_L$ given by $g$, where $G_L \coloneqq G \times_{\Spa K} \Spa L$. We let
    \[
        N_L \coloneqq \bigcap_{g \in G(L)} \Ker \Inn_g.
    \]
    Then $N_L$ is a Zariski-closed subgroup of $G_L$ as each $\Ker \Inn_g$ is so. Further, $N_L$ is invariant under Galois action. By \'etale descent of coherent ideal sheaves (\cite[Corollary 3.2.3]{deJ96}), $N_L$ descends to a Zariski-closed subgroup $Z_L$ of $G$. By definition, $Z_L(\overline{K})$ consists of elements of $G(\overline{K})$ which commute with $L$-points $G(L)\subset G(\overline{K})$ of $G$.

    Suppose we have a tower of finite Galois extensions $K \subset L \subset E$ inside $\overline{K}$. By definition, there is a Zariski-closed immersion
    \[
        N_E \to N_L \times_{\Spa L} \Spa E,
    \]
    which descends to a Zariski-closed immersion $Z_E \hookrightarrow Z_L$. This shows that our construction of $Z_{\bullet}$ is functorial with respect to the finite Galois extensions. Let
    \[
        Z \coloneqq \bigcap_{\text{$L/K$ finite Galois, $L \subset \overline{K}$}.} Z_L.
    \]
    We show that $Z$ is the centre of $G$. Let $c\colon Z \times_K G \to G, (s,g) \mapsto sgs^{-1}$ be the conjugation action of $Z$ on $G$. For any finite Galois extension $L/K$, the map $c(L)$ coincides with the projection to $G(L)$. Since $Z\times_K G$ and $G$ are reduced separated rigid spaces over $K$, this shows that $c\colon Z \times_K G \to G$ coincides with the projection to $G$. Thus, for any finite extension $L/K$, and any $g \in Z(L)$, $g$ acts trivially on $G_L$. Conversely, let $g$ be an element of $G(L)$ such that $g$ acts trivially on $G_L$ by conjugation. Then $g$ commutes with elements of $G(E)$ for all finite Galois extension $E$ of $K$ containing $L$. By construction, $g \in Z(L)$.
\end{proof}
\begin{rema}
    For an algebraically closed field $K$ and a connected rigid group $G$, this is due to Heuer--Werner--Zhang (\cite[Lemma 2.9]{Heuer_Werner_Zhang_2025}), where the centre of $G$ is defined to be the kernel of the adjoint morphism (\cite[\S 3.1]{Heu22})
    \[
        \mathop{\mathrm{adj}}\colon G \to \Aut(\Lie(G)).
    \]
\end{rema}

\begin{coro}\label{coro: algebraicallyClosedPointOfCentre}
    For any complete algebraically closed extension $K'$ of $K$, and any open bounded valuation subring $K'^+\subset K'$ containing $K^\circ$, the set $Z(K',K'^+)$ is the centre of the abstract group $G(K',K'^+)$.
\end{coro}
\begin{proof}
    The map $c\colon Z \times_K G \to G, (s,g) \mapsto sgs^{-1}$ coincides with the projection to $G$ as shown in the proof of Proposition \ref{prop: centreIsRigidZariskiClosedSubgroup}. Thus, for any $(K',K'^+)$, the subgroup $Z(K',K'^+)$ is contained in the centre of the abstract group $G(K', K'^+)$. Conversely, for any finite Galois extension $L$ of $K$, we have $L^\circ \subset K'^+$ since $L^\circ$ is integral over $K^{\circ}$.  There is then an inclusion $G(L) \subset G(K', K'^+)$. By construction, $Z(K', K'^+)$ consists of elements of $G(K', K'^+)$ which commute with elements of $G(L)$ for all $L/K$ finite Galois. This shows that $Z(K', K'^+)$ contains the centre of the abstract group $G(K', K'^+)$.
\end{proof}

\subsection{Compatibility with \'etale topology and $v$-topology}\label{ssec: centreCompatibilityEtaleAndV}
Let $X$ be an adic space over $K$, viewed as a diamond on $\Perfd_K$. We let $X_v$ (\resp $X_\et$) be the $v$-site (\resp \'etale site) of $X$. There is a morphism of sites $\nu\colon X_v \to X_\et$.

Let $G$ be a rigid group over $K$. We view it as a diamond $G^\diamondsuit$ on $\Perfd_K$. By restriction, it defines a group sheaf $G_v$ on $X_v$ and a group sheaf $G_\et$ on $X_\et$. Clearly $G_\et = \nu_* G_v$.

Recall that for a group object $H$ in a topos $T$, the \emph{centre $Z(H)$} of $H$ is defined to be the kernel of the adjoint action $\mathop{\mathrm{adj}}\colon H \to \SAut(H)$.

Let $Z$ be the centre of the rigid group $G$. Then $Z$ is a Zariski-closed rigid subgroup of $G$ by Proposition \ref{prop: centreIsRigidZariskiClosedSubgroup}. We aim to show that the group sheaf $Z_v$ (\resp $Z_\et$) defined by $Z$ is the centre of the group sheaf $G_v$ (\resp $G_\et$).

This is easy for $v$-topology.
\begin{prop}\label{prop: centreDiamonds}
    The group sheaf $Z^\diamondsuit$ is the centre of $G^\diamondsuit$.
\end{prop}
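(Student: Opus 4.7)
The plan is to identify the centre sub-$v$-sheaf $\mathcal{Z}(G^\diamondsuit)\hookrightarrow G^\diamondsuit$ with $Z^\diamondsuit$ by checking both inclusions on sections, reducing each to the statement about $C$-points in Corollary~\ref{coro: algebraicallyClosedPointOfCentre}. Spelling out the definition, for a perfectoid $Y$ the group $\mathcal{Z}(G^\diamondsuit)(Y)$ consists of those $g\in G^\diamondsuit(Y)=\Hom_K(Y,G)$ such that for every morphism $Y'\to Y$ in $\Perfd_K$ and every $h\in G^\diamondsuit(Y')$, the commutator $[g|_{Y'},h]$ equals $1$ in $G^\diamondsuit(Y')$.

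For the inclusion $Z^\diamondsuit\subseteq\mathcal{Z}(G^\diamondsuit)$, I would take $g\in Z^\diamondsuit(Y)$ and $h\in G^\diamondsuit(Y')$ and view the commutator $[g|_{Y'},h]$ as a morphism $Y'\to G$ of adic spaces. Because $G$ is separated, agreement with the constant morphism $1$ is equivalent to $(\,[g|_{Y'},h]\,,\,1\,)\colon Y'\to G\times G$ factoring through the Zariski-closed diagonal. Reducedness of perfectoid algebras reduces this factorisation to a check on geometric points $\bar y=\Spa(C,C^+)\to Y'$, where the required equality $[g(\bar y),h(\bar y)]=1$ follows from Corollary~\ref{coro: algebraicallyClosedPointOfCentre}: $g(\bar y)$ lies in $Z(C)$, which is precisely the abstract centre of $G(C)$.

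For the reverse inclusion, given $g\in\mathcal{Z}(G^\diamondsuit)(Y)$ viewed as a morphism $Y\to G$, I would show that it factors through the closed subgroup $Z\hookrightarrow G$. Reducedness of $Y$ again lets us test this on geometric points: for each $\bar y\colon\Spa(C,C^+)\to Y$ and each $h\in G(C)=G^\diamondsuit(\bar y)$, restricting the centre condition to $Y'=\bar y$ gives $[g(\bar y),h]=1$, so $g(\bar y)$ lies in the abstract centre of $G(C)$. By Corollary~\ref{coro: algebraicallyClosedPointOfCentre} this centre equals $Z(C)$, so $g(\bar y)$ factors through $Z$; doing this at every geometric point yields the desired factorisation of $g$.

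The main technical point common to both inclusions is the principle that a morphism from a perfectoid space to a separated rigid space is determined by its restrictions to geometric points, and that factorisation through a Zariski-closed subspace can be checked in the same way. Both rest on the reducedness of perfectoid algebras; once this is granted, everything follows formally from the abstract-group identification of Corollary~\ref{coro: algebraicallyClosedPointOfCentre} together with the sheaf-theoretic definition of the centre.
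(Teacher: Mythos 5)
Your proof is correct, but it follows a genuinely different route from the paper's. The paper first establishes the equality $Z(G^\diamondsuit)(Y)=Z^\diamondsuit(Y)$ only for $Y$ a perfectoid field: the key issue there is to pass from ``$g$ acts trivially on the sheaf $G^\diamondsuit|_Y$'' to ``$g$ acts trivially on the adic space $G_Y$'', which is done by invoking smoothness (hence normality) of $G_Y$ and the full faithfulness of the diamond functor on normal rigid spaces over a perfectoid field; it then upgrades this pointwise statement to an equality of subsheaves by showing that $Z(G^\diamondsuit)$ is a diamond (the inclusion $Z^\diamondsuit\hookrightarrow Z(G^\diamondsuit)$ is quasi-compact and quasi-separated, so \cite[Proposition 11.10]{Sch22} applies) and concluding with \cite[Lemma 11.11]{Sch22}. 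You instead verify both inclusions over an arbitrary perfectoid $Y$ by reducing directly to geometric points $\Spa(C,\mathcal{O}_C)\to Y$, using only Corollary \ref{coro: algebraicallyClosedPointOfCentre} together with the separatedness of $G$ and the principle that a morphism from a perfectoid space to a separated rigid space, as well as factorisation through a Zariski-closed subspace, is detected on geometric points. This is more elementary --- it bypasses both the diamond formalism and the full-faithfulness theorem --- at the cost of two facts you assert without proof: that rigid groups are separated (true, since the unit section is a Zariski-closed immersion and the diagonal is its pullback along $(x,y)\mapsto xy^{-1}$), and the detection principle itself, whose correct justification is the uniformity of perfectoid Tate rings (the spectral seminorm is a norm, so an ideal whose vanishing locus is all of the adic spectrum is zero, and rank-one geometric points suffice to test vanishing) rather than reducedness per se. With those two points made precise, your argument is complete and self-contained.
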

\begin{proof}
    The conjugation action of $Z$ on $G$ is trivial as shown in the proof of Proposition \ref{prop: centreIsRigidZariskiClosedSubgroup}. Thus, we have an inclusion $Z^{\diamondsuit} \subset Z(G^{\diamondsuit})$. Consider the diagram
    \[\begin{tikzcd}
        {Z^\diamondsuit} & {Z(G^\diamondsuit)} \\
        & {G^\diamondsuit}
        \arrow["j", hook, from=1-1, to=1-2]
        \arrow["i"', hook, from=1-1, to=2-2]
        \arrow["{i'}", hook, from=1-2, to=2-2]
    \end{tikzcd}\]
    By \cite[Proposition 11.10]{Sch22}, $Z(G^\diamondsuit)$ is a diamond. The injection $i$ is a closed immersion by Proposition \ref{prop: ZariskiClosedisClosed}. Hence, $j$ is also a closed immersion as the diagonal of $i'$ is an isomorphism. We only need to show $|j|$ is surjection. Then $|j|$ is a homeomorphism since it is closed map, and we conclude by \cite[Proposition 12.15 (iii)]{Sch22}. It suffices to show that for any $(C,C^+)$ over $K$, where $C$ is a complete algebraically closed perfectoid field, $C^+$ is a open bounded valuation subring of $C$, any $K$-morphism $\Spa(C,C^+) \to Z(G^{\diamondsuit})$ lifts to $\Spa(C,C^+) \to Z^{\diamondsuit}$.

    Let $g \in G^{\diamondsuit}(C,C^+) = G(C,C^+)$. Suppose $g$ acts trivially on $G^{\diamondsuit}|_{(C,C^+)}$. Then $g$ belongs to the centre of the abstract group $G(C,C^+)$. By Corollary \ref{coro: algebraicallyClosedPointOfCentre}, we then have $g \in Z^{\diamondsuit}(C,C^+)$. This shows that $Z^{\diamondsuit}(C,C^+) \simeq Z(G^{\diamondsuit})(C,C^+)$.
\end{proof}

\begin{coro}\label{coro: vCentreOfGv}
    The group sheaf $Z_v$ is the centre of $G_v$.
\end{coro}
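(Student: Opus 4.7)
The plan is to deduce this from the preceding proposition together with the general fact that restriction of a sheaf of groups to a slice site commutes with taking the centre. First I would recall that the $v$-site $X_v$ is, as a category, the slice $(\Perfd_K)/X$ endowed with the restricted $v$-topology, so that $G_v = G^\diamondsuit|_{X_v}$ and $Z_v = Z^\diamondsuit|_{X_v}$ tautologically.

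Next, I would observe that for any sheaf of groups $F$ on a site $\mathcal{C}$, the centre $Z(F)$ is the subsheaf given by
\[
    Z(F)(U) = \{\, g \in F(U) : g|_V \cdot h = h \cdot g|_V \text{ for every } V \to U \text{ in } \mathcal{C} \text{ and every } h \in F(V)\,\},
\]
equivalently the equalizer of the conjugation and trivial actions $F \to \underline{\mathrm{Aut}}(F)$. This description shows that for any $U \in X_v$, checking centrality of $g \in F(U)$ with respect to the site $X_v$ involves running $V$ through $(X_v)/U$, while checking centrality with respect to $\Perfd_K$ involves running $V$ through $(\Perfd_K)/U$. But because $U$ comes equipped with a structure map to $X$, every $V \in (\Perfd_K)/U$ canonically lies in $(X_v)/U$, and vice versa. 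Hence the two centrality conditions coincide, and restriction to $X_v$ commutes with $Z(-)$.

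Combining these two remarks with the previous proposition yields
\[
    Z(G_v) \;=\; Z(G^\diamondsuit|_{X_v}) \;=\; Z(G^\diamondsuit)|_{X_v} \;=\; Z^\diamondsuit|_{X_v} \;=\; Z_v,
\]
proving the corollary. I expect no real obstacle here: the content is entirely in the preceding proposition, and the remaining verification is the formal identification of slice categories above. The analogous statement for $G_\et$ will then be treated separately, since it requires the (less trivial) compatibility of the \'etale restriction with taking centres, which uses Heuer's local triviality results.
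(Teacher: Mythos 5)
Your argument is correct and matches the paper's (one-line) proof, which simply says the corollary "follows by restriction to $X_v$"; your identification of $(X_v)_{/U}$ with $(\Perfd_K)_{/U}$ is precisely the formal content behind that sentence. No gap here.
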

This follows by restriction to $X_v$.

To show that $Z_\et$ is the centre of $G_\et$, we need the following lemma, which is an analogue of \cite[(IV{;} 11.10.6)]{EGA}.

\begin{lemm}\label{lemm: analyticallyDenseFamily}
    Let $S$ and $H$ be smooth rigid spaces over $K$. Assume $H$ is separated. Let \{$\Spa(K_i) \to S$\} be the family of all classical rigid points of $S$. Let $T$ be an adic space over $K$. For any $i$, consider the base change
    \[\begin{tikzcd}
        {\Spa(K_i)} & {T_i} \\
        S & {S_T} \\
        {\Spa(K)} & T
        \arrow[from=1-1, to=2-1]
        \arrow[from=1-2, to=1-1]
        \arrow["\lrcorner"{anchor=center, pos=0.125, rotate=-90}, draw=none, from=1-2, to=2-1]
        \arrow["{\phi_i}", from=1-2, to=2-2]
        \arrow[from=2-1, to=3-1]
        \arrow[from=2-2, to=2-1]
        \arrow["\lrcorner"{anchor=center, pos=0.125, rotate=-90}, draw=none, from=2-2, to=3-1]
        \arrow[from=2-2, to=3-2]
        \arrow[from=3-2, to=3-1]
    \end{tikzcd}\]
    Then for any two maps $f^{\diamondsuit},g^{\diamondsuit}\colon S_T^{\diamondsuit} \rightrightarrows H_T^{\diamondsuit}$ over $T^{\diamondsuit}$, if $f^{\diamondsuit}\phi_i^{\diamondsuit} = g^{\diamondsuit}\phi_i^{\diamondsuit}$ for all $i$, then $f^{\diamondsuit} = g^{\diamondsuit}$.
\end{lemm}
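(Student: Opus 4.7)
The plan is to form the equalizer of $f$ and $g$, observe it is a Zariski-closed subspace of $S_T$, and exploit the Zariski density of classical rigid points of $S$ to force this equalizer to equal all of $S_T$.

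I would first form the equalizer $E := (f,g)^{-1}(\Delta_{H/T}) \hookrightarrow S_T$. Since $\Delta_{H/T}$ is a Zariski-closed immersion by hypothesis, so is $E \hookrightarrow S_T$. The assumption $f\phi_i = g\phi_i$ says that each $\phi_i \colon T_i \to S_T$ factors through $E$, reducing the claim to the following: any Zariski-closed subspace of $S_T$ containing $\phi_i(T_i)$ for every classical rigid point $s_i$ of $S$ coincides with $S_T$.

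Next I would reduce to a local algebraic statement. The question is local on $S_T$, hence on $T$ and on $S$, so one may assume $S = \Spa(A)$ and $T = \Spa(B, B^+)$ are affinoid. Then $S_T = \Spa(C, C^+)$ with $C = A \hat\otimes_K B$, and $E$ is cut out by an ideal $I \subset C$. The classical points of $S$ correspond to maximal ideals $\mathfrak{m}_i \subset A$ with finite residue field $K_i = A/\mathfrak{m}_i$, and for each such $\mathfrak{m}_i$ the kernel of $C \twoheadrightarrow K_i \otimes_K B$ equals $\mathfrak{m}_i C$ (using that $\mathfrak{m}_i$ is finitely generated and $K_i/K$ is finite). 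The hypothesis then translates to $I \subset \mathfrak{m}_i C$ for every $i$, so it suffices to establish
\[
    \bigcap_i \mathfrak{m}_i \cdot (A \hat\otimes_K B) = 0.
\]

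To prove this, I would fix a presentation $A = K\langle T_1, \ldots, T_n \rangle / J$, so that $C$ is a quotient of $B\langle T_1, \ldots, T_n \rangle$. Lifting any element of $\bigcap_i \mathfrak{m}_i C$ to a Tate series $\tilde f = \sum_\alpha b_\alpha T^\alpha$, the hypothesis forces the evaluation $\tilde f(a) = \sum_\alpha b_\alpha a^\alpha$ to lie in $J \cdot (K(a) \otimes_K B)$ for every classical point $a$ of $\Spa A$. Viewing $a \mapsto \tilde f(a)$ as a $B$-valued rigid-analytic function on the closed polydisc and combining the Jacobson density of classical points in a rigid affinoid (the rigid Nullstellensatz) with the uniqueness of Taylor expansions of Banach-algebra-valued analytic functions, one deduces that $\tilde f \in J \cdot B\langle T \rangle$, whence its image $f$ in $C$ vanishes.

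The main obstacle is this final step. The reductions of the first two steps are essentially formal; the heart of the argument is to upgrade the Jacobson density statement for $A$ to one about $B$-valued analytic functions, which requires genuine functional-analytic care with the topology of the Huber ring $B$ and with the identification of the kernel of $C \twoheadrightarrow K_i \otimes_K B$ as the honest ideal $\mathfrak{m}_i C$ rather than merely its closure.
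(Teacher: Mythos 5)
Your reductions in the first two steps are sound and agree with what the paper does (mostly implicitly): pulling back the Zariski-closed diagonal along $(f,g)$ gives a Zariski-closed equalizer through which every $\phi_i$ factors, and after localizing to $S=\Spa(A,A^+)$, $T=\Spa(B,B^+)$ the problem becomes one about $C=A\hotimes_K B$. But the second half has two genuine gaps. First, your target $\bigcap_i\fm_iC=0$ only yields $I=0$ if you also know $\ker(C\to\kappa_i\hotimes_K B)\subseteq\fm_iC$ (the containment $\supseteq$ is the easy one); you flag this identification as an obstacle but do not prove it, and even granting it, $\bigcap_i\fm_iC=0$ is then just a restatement of the injectivity of $C\to\prod_i(\kappa_i\hotimes_K B)$, so the reduction buys nothing. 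Second, and more seriously, the concluding step does not work as described: uniqueness of Taylor coefficients of a $B$-valued series $\sum_\alpha b_\alpha T^\alpha$ vanishing at classical points could at best treat the case $A=K\langle T_1,\dots,T_n\rangle$ (and even there one must explain why vanishing at all classical points of $\mathbb{B}^n$ kills coefficients lying in an arbitrary complete Tate ring $B$, which over a non-spherically-complete $K$ need not admit enough continuous functionals to reduce to the scalar case). When $J\neq 0$ the classical points of $V(J)$ are nowhere dense in $\mathbb{B}^n$, so no Taylor-expansion or density argument on the polydisc applies; what you would need is precisely that $\tilde f$ vanishing on the classical points of $V(J)$ forces $\tilde f\in \overline{J\cdot B\langle T\rangle}$, which is the original assertion restated rather than proved.

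The paper circumvents both issues by factoring
\[
A\hotimes_K B \longrightarrow \Bigl(\prod_{x\in\Spm A}\kappa_x\Bigr)\hotimes_K B\longrightarrow \prod_{x\in\Spm A}(\kappa_x\hotimes_K B).
\]
The first arrow is injective because $A\hookrightarrow\prod_x\kappa_x$ (the Jacobson property of affinoid algebras) and $-\hotimes_K B$ is exact by Gruson's theorem; the second is injective after writing $B$ as a countably filtered colimit of countably generated $K$-Banach algebras, which are topologically free by \cite[\S 2.7, Theorem 4]{BGR84}, so that the completed tensor product commutes with the infinite product in the required sense. This handles a general affinoid $A$ uniformly and avoids any kernel identification. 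You should replace your final step with an argument of this kind, or otherwise give a genuine proof of the injectivity of $C\to\prod_x(\kappa_x\hotimes_K B)$.
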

Note that all those $T_i$ are finite \'etale over $T$.
\begin{proof}
    Let $i^{\diamondsuit}\colon \Ker(f^{\diamondsuit},g^{\diamondsuit}) \to S_T^{\diamondsuit}$ be the equalizer of $f^{\diamondsuit}$ and $g^{\diamondsuit}$. We only need to show $i^{\diamondsuit}$ is an isomorphism. By Proposition \ref{prop: ZariskiClosedisClosed}, $H^{\diamondsuit} \to H^{\diamondsuit}\times_{K^{\diamondsuit}} H^{\diamondsuit}$ is a closed immersion of diamonds. We see that $i^{\diamondsuit}$ is also. It remains to show that $|i^{\diamondsuit}|$ is surjective, as it would imply that $|i^{\diamondsuit}|$ is an homeomorphism, and one then deduces that $i^{\diamondsuit}$ is an isomorphism by \cite[Proposition 12.15 (iii)]{Sch22}. It suffices to show that for any $p\colon \Spa(C,C^+) \to T$, $i^{\diamondsuit} \times_{T^{\diamondsuit}} \Spa(C,C^{+})^{\diamondsuit}$ is an isomorphism, where $C$ is a complete algebraically closed perfectoid field, and $C^+$ is a open bounded valuation subring of $C$. We then assume that $T = \Spa(C,C^{+})$.

    As $S$ and $H$ are smooth, we have
    \[
        \Hom_{T}(S_T,H_T) \xrightarrow{\sim} \Hom_{T^{\diamondsuit}}(S_T^{\diamondsuit}, H_T^{\diamondsuit})
    \]
    by \cite[Proposition 10.2.3]{Berkely}. Thus, $f^{\diamondsuit}$ and $g^{\diamondsuit}$ come from morphisms of adic spaces $f,g\colon S_T \to H_T$, and we have $f\phi_i = g\phi_i$ for all $i$. We will show that $f=g$ in this case.

    The question is local on $S$, we may assume $S = \Spa(A,A^+)$ is an affinoid adic space. The adic spaces $S$, $H$, $S_T$ and $H_T$ are locally strongly Noetherian. By \cite[Lemma B.6.7, Corollary B.6.9]{Zav24}, $H_T$ is separated over $T$. Thus, $i\colon \Ker(f,g) \to S_T$ is a Zariski-closed immersion, i.e. $i$ is represented as $\Spa(A\hotimes_K C/I, (A\hotimes_K C/I)^+) \to \Spa(A\hotimes_K C, (A\hotimes_K C)^+)$ for an ideal $I$ of $A \hotimes_K C$. We only need to show $A \hotimes_K C \to \prod_{x \in \Spm A} (\kappa_x \hotimes_K C)$ is an injection, where $\kappa_x \coloneqq A/\fm_x$.

    Classical rigid points of $S$ corresponds to maximal ideals of $A$. Since $A$ is Jacobson and reduced, the map
    \[
        A \to \prod_{x \in \Spm A} \kappa_x
    \]
    is injective. The map $A \hotimes_K C \to \prod_{x \in \Spm A} (\kappa_x \hotimes_K C)$ factors as a composition
    \[
        A\hotimes_K C \to \left(\prod_{x \in \Spm A} \kappa_x\right) \hotimes_K C \to \prod_{x \in \Spm A} (\kappa_x \hotimes_K C),
    \]
    where the first arrow is an injection since $-\hotimes_K C$ is exact \cite[\S 2, no. 2, théorème 1]{Gru66}. It remains to show the second arrow is an injection.

    We can write $C$ as a countably filtered colimit of countably generated $K$-Banach algebras. Since countably filtered colimits commute with completion, we may assume $C$ is a countably generated $K$-Banach algebra. By \cite[§2.7, Theorem 4]{BGR84}, $C$ is topologically free. The assertion follows.
\end{proof}

\begin{prop}\label{prop: etCentreOfGet}
    The group sheaf $Z_\et$ is the centre of $G_\et$.
\end{prop}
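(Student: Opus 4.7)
The plan is to prove the two inclusions $Z_\et \subseteq Z(G_\et)$ and $Z(G_\et) \subseteq Z_\et$ of subsheaves of $G_\et$. The first is formal: for $Y \in X_\et$ one has $Z_\et(Y) = Z^\diamondsuit(Y)$, and by the preceding proposition this set consists of those $g \in G(Y)$ that commute with every section of $G^\diamondsuit$ over any $Y' \to Y$ in $\Perfd_K$, so in particular over any $Y' \to Y$ in $X_\et$.

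The real content is the reverse inclusion. I would fix $Y \in X_\et$ and $g \in Z(G_\et)(Y)$, viewed as a morphism $g \colon Y \to G$. Using that $Z^\diamondsuit$ is the centre of $G^\diamondsuit$, the condition $g \in Z(Y) = Z_\et(Y)$ is equivalent to the equality of the two $Y$-morphisms
\[
    f, h \colon G \times_K Y \to G \times_K Y,
\]
where $h = \mathrm{id}$ and $f(g', y) = (g(y) \cdot g' \cdot g(y)^{-1},\, y)$ is conjugation by $g$ in the first factor.

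To prove $f = h$ I would invoke Lemma~\ref{lemm: analyticallyDenseFamily} with $S = G$, $T = Y$ and $H = G \times_K Y$ over $Y$. The diagonal of $H$ over $Y$ is a closed immersion because rigid groups are separated, so the hypotheses of the lemma are in place. For each classical rigid point $x_i \in G(K_i)$, the induced map $\phi_i \colon T_i \coloneqq Y \times_K \Spa(K_i) \to G \times_K Y$ lies over a finite étale cover $T_i \to Y$ (since $K_i/K$ is a finite separable extension in characteristic zero), hence $T_i \in X_\et$. The étale centrality of $g$ then forces $g|_{T_i} \cdot x_i \cdot g|_{T_i}^{-1} = x_i$ in $G(T_i)$, which unpacks to $f \phi_i = h \phi_i$. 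The lemma delivers $f = h$, and therefore $g \in Z(Y)$.

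The obstacle the argument has to bridge is precisely the mismatch between pointwise centrality of $g$ tested on étale covers of $Y$ (the hypothesis) and the equality of morphisms $G \times_K Y \to G \times_K Y$ of adic spaces that membership in $Z$ demands. One cannot directly test on the universal element $\mathrm{id}_G \colon G \to G$, since $G \times_K Y \to Y$ is not étale; Lemma~\ref{lemm: analyticallyDenseFamily} is exactly the tool that lets classical rigid points of $G$, whose pullbacks to $Y$ \emph{are} finite étale, serve as a substitute.
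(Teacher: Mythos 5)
Your proposal is correct and follows essentially the same route as the paper: both reduce the content to the inclusion $Z(G_\et) \subseteq Z_\et$ and apply Lemma~\ref{lemm: analyticallyDenseFamily} with $S = G$, using that the pullbacks of classical rigid points of $G$ are finite \'etale over the base and hence lie in $X_\et$, so that \'etale centrality of $g$ forces the conjugation morphism to agree with the identity on an analytically dense family. Your added remarks (checking the diagonal hypothesis via separatedness, and the explicit reduction to the morphism-level equality) only make explicit what the paper leaves implicit.
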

\begin{proof}
    Take $T=X$, $S=G$ in Lemma \ref{lemm: analyticallyDenseFamily}, let $X_i \to G_X$ be as in the lemma. Clearly we have $Z_\et \subset Z(G_\et)$ since the conjugation action $Z \times_K G \to G$ coincides with the projection to $G$. Conversely, for any $Y \in X_\et$, $g \in Z(G_\et)(Y) \subset G_{\et}(Y)$, $g$ acts trivially on $G_\et|_Y$ by conjugation. We only need to show $\Inn_g$ acts trivially on $G_Y^{\diamondsuit}$, then we conclude by Proposition \ref{prop: centreDiamonds}. Without loss of generality, we assume $Y=X$. For each $i$, the two compositions
    \[\begin{tikzcd}
        {X_i^{\diamondsuit}} & {G_X^{\diamondsuit}} & {G_X^{\diamondsuit}}
        \arrow[from=1-1, to=1-2]
        \arrow["{\Inn_g}", shift left, from=1-2, to=1-3]
        \arrow["{\id}"', shift right, from=1-2, to=1-3]
    \end{tikzcd}\]
    coincide since $X_i \in X_\et$. We then conclude by Lemma \ref{lemm: analyticallyDenseFamily}.
\end{proof}



\section{Comparison of $G$-gerbes}
We commence with the general cases.

\subsection{$G$-gerbes on sites}

\begin{defi}
    Let $E$ be a site, and let $G$ be a group sheaf on $E$. We denote by $\Gerb(E;G)$ the 2-category consisting of $G$-gerbes \cite[IV, definition 2.2.2]{Gir71} on $E$.
\end{defi}

\begin{rema}
    \begin{enumerate}
        \item A $G$-gerbe on a site $E$ is a stack on groupoids on $E$ that is locally isomorphic to $\bbB G$ with trivial band class in $H^1(E, \operatorname{Out}(G))$.

        \item Let $E$ be a site, and let $G$ be a group on $E$. Let $\cF, \cG$ be $G$-gerbes on $E$. Then the stack $\SHom_G(\cF,\cG)$ on $E$ is a $Z(G)$-gerbe. See \cite[IV,theoreme 2.3.2]{Gir71}.
    \end{enumerate}
\end{rema}

\begin{defi}
    Let $f\colon E' \to E$ be a morphism of sites, and let $G$ be a group sheaf on $E'$. We let $\Gerb^f(E';G)$ be the full subcategory of $\Gerb(E';G)$ consisting of $G$-gerbes on $E'$ which comes from $E$ \cite[V, 3.1.8.4]{Gir71}. Namely, a $G$-gerbe $\cG$ lies in $\Gerb^f(E';G)$ if and only if there is a covering sieve $R$ of $E$ such that $\cG$ is trivial on $f^{-1}(R)$. If there is no risk of confusion, we write $\Gerb^E(E';G) = \Gerb^f(E';G)$.
\end{defi}

The following proposition comes from Giraud \cite[V, corollaire 3.1.10.3]{Gir71}. We restate it here.
\begin{prop}[Giraud]\label{prop: GiraudResultOfCentre}
    Let $f\colon E' \to E$ be a morphism of sites. Let $G$ be a group sheaf on $E'$, and let $Z(G)$ be its centre. Suppose that the morphism
    \[
        H^2(E, f_*Z(G)) \to H^2(E', Z(G))
    \]
    is surjective. Then any $G$-gerbe $\cG$ on $E'$ comes from a gerbe on $E$.
\end{prop}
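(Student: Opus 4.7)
My approach is to measure the class of $\cG$ by an abelian $Z(G)$-gerbe and then use the surjectivity hypothesis to descend it to $E$.

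First, I would form the $Z(G)$-gerbe $\cH \coloneqq \SHom_G(\bbB G, \cG)$ on $E'$, using \cite[IV, théorème 2.3.2]{Gir71}. For any object $U$ of $E'$, a section of $\cH$ over $U$ is precisely a $G$-equivalence $\bbB G|_U \xrightarrow{\sim} \cG|_U$, so triviality of $\cH|_U$ as a $Z(G)$-gerbe is equivalent to triviality of $\cG|_U$ as a $G$-gerbe. Hence the task reduces to exhibiting a covering sieve $R$ of $E$ such that $\cH$ trivializes on $f^{-1}(R)$.

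Next, let $[\cH] \in H^2(E', Z(G))$ denote the class of $\cH$. Using the surjectivity hypothesis, I would lift $[\cH]$ to a class $\alpha \in H^2(E, f_*Z(G))$ and pick an $f_*Z(G)$-gerbe $\cT$ on $E$ representing $\alpha$. Pulling back to $E'$ and changing band along the counit $f^{-1}f_*Z(G) \to Z(G)$ yields a $Z(G)$-gerbe $\cT'$ on $E'$. Abelian gerbes are classified by their $H^2$-class, and the composite of pullback with change of band is by construction the map appearing in the hypothesis; hence $[\cT'] = [\cH]$ and we obtain an equivalence $\cT' \simeq \cH$ of $Z(G)$-gerbes. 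Since $\cT$ is a gerbe on $E$, it trivializes on some covering sieve $R$ of $E$; pullback and change of band both preserve triviality, so $\cT'$, and hence $\cH \simeq \cT'$, is trivial on $f^{-1}(R)$. By the first paragraph, $\cG$ is then trivial on $f^{-1}(R)$, placing $\cG$ in $\Gerb^f(E'; G)$.

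The main conceptual ingredient is the translation between the non-abelian $G$-gerbe $\cG$ and the abelian $Z(G)$-gerbe $\SHom_G(\bbB G, \cG)$ provided by Giraud's theorem. The most delicate technical check will be the compatibility of cohomology classes under pullback and change of band; this is standard in Giraud's formalism, and the surjectivity hypothesis is precisely designed to match the resulting composite map.
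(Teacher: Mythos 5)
Your argument is correct, and it supplies a genuine proof where the paper gives none: the paper simply restates Giraud's corollaire V.3.1.10.3 and cites it, so there is no in-text proof to compare against. Your route — replacing the non-abelian $G$-gerbe $\cG$ by the abelian $Z(G)$-gerbe $\SHom_G(\bbB G,\cG)$, lifting its class through the surjection $H^2(E,f_*Z(G))\to H^2(E',Z(G))$, and transporting local triviality back — is essentially the mechanism underlying Giraud's corollary, where $H^2(E',L)$ is a (pseudo-)torsor under $H^2(E',Z(L))$ and the classes coming from $E$ form a sub-torsor under the image of $H^2(E,f_*Z(L))$. Two points deserve explicit mention. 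First, your use of $\bbB G$ as a base point (so that $\SHom_G(\bbB G,\cG)$ measures the class of $\cG$) is legitimate only because this paper's convention is that a $G$-gerbe has trivial band, hence is genuinely banded by $\band(G)$ and comparable to $\bbB G$; in Giraud's general band-theoretic setting there is no such global base point and one must work with the torsor structure directly. Second, the identification of the composite ``pull back, then change band along $f^{-1}f_*Z(G)\to Z(G)$'' with the map in the hypothesis is exactly Giraud's functoriality of abelian $H^2$ for a morphism of sites; you correctly flag this as the technical check, and it is where the content of the hypothesis enters, so in a written-up version it should carry a precise reference rather than the label ``standard.'' With those caveats the proof is complete and matches the intended scope of the proposition.
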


\begin{coro}
    Under the situation above, we have $\Gerb^E(E';G) \cong \Gerb(E';G)$.
\end{coro}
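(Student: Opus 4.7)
The plan is to observe that this corollary is an immediate unwinding of Proposition \ref{prop: GiraudResultOfCentre} together with the definition of $\Gerb^E(E';G)$ as a full sub-$2$-category, so I would prove it in two quick steps. First, since $\Gerb^E(E';G) \subseteq \Gerb(E';G)$ is by construction a full sub-$2$-category, the inclusion functor $\iota$ is automatically fully faithful on both $1$- and $2$-morphisms; no work is needed here.

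The remaining step is essential surjectivity. Given any $G$-gerbe $\cG$ on $E'$, Proposition \ref{prop: GiraudResultOfCentre} guarantees that $\cG$ comes from a $G$-gerbe on $E$. Unpacking Giraud's notion of ``coming from $E$'' [V, 3.1.8.4], this yields a covering sieve $R$ of $E$ for which $\cG$ becomes trivial on $f^{-1}(R)$, which is precisely the criterion defining membership in $\Gerb^E(E';G)$. Hence $\iota$ is essentially surjective, and therefore an equivalence of $2$-categories. The only subtlety is reconciling the two formulations of ``coming from $E$'' (existence of a gerbe on $E$ pulling back to $\cG$ versus triviality on a pulled-back covering sieve); this equivalence is standard and is exactly the content of Giraud's definition, so there is really no obstacle to overcome.
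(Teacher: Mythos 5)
Your proposal is correct and matches the paper's (implicit) argument: the corollary is stated without proof precisely because it is the immediate combination of full-subcategory inclusion being fully faithful and Proposition \ref{prop: GiraudResultOfCentre} supplying essential surjectivity. Your remark reconciling the two formulations of ``coming from $E$'' is accurate and harmless, since the paper's definition of $\Gerb^E(E';G)$ already adopts Giraud's sieve-theoretic criterion.
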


\subsection{Direct image of outer isomorphisms}
For group sheaf $G$ on site $E$, we let $\SInn(G)$ be the sheaf of inner automorphisms of $G$ on $E$. By definition, $\SInn(G) \cong G/Z(G)$, where $Z(G)$ is the centre of $G$.

\begin{lemm}
    Let $f\colon E' \to E$ be a morphism of sites, and let $G$ be a group sheaf on $E'$. Let $Z(G)$ be its centre. Suppose $R^1f_* Z(G) = 0$. Then there is a natural morphism $f_*\colon f_*\SInn(G) \to \SInn(f_*G)$ making the diagram
    \[\begin{tikzcd}
        {f_*G} & {f_*\SInn(G)} \\
        {f_*G} & {\SInn(f_*G)}
        \arrow[from=1-1, to=1-2]
        \arrow[equals, from=1-1, to=2-1]
        \arrow["{f_*}", from=1-2, to=2-2]
        \arrow[from=2-1, to=2-2]
    \end{tikzcd}\]
    commute.
\end{lemm}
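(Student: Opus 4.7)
The plan is to push forward the central extension
\[ 1 \to Z(G) \to G \to \SInn(G) \to 1 \]
on $E'$ and invoke the universal property of the quotient. Two inputs are needed.

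First I would verify that $f_*Z(G) \subseteq Z(f_*G)$ as subsheaves of $f_*G$. This is a direct unravelling of definitions: a section $g \in Z(G)(f^{-1}U) = f_*Z(G)(U)$ commutes locally with every section of $G$ over $f^{-1}U$, hence with every section of $f_*G(U') = G(f^{-1}U')$ for every $U' \to U$ in $E$, so $g$ is central in $f_*G$. Next I would use $R^1 f_* Z(G) = 0$ to show that $f_*G \to f_*\SInn(G)$ is an epimorphism of sheaves. Since $Z(G)$ is abelian and central in $G$, the central extension produces a direct-image five-term exact sequence
\[ 1 \to f_*Z(G) \to f_*G \to f_*\SInn(G) \to R^1 f_* Z(G), \]
and the vanishing of the last term yields a natural isomorphism $f_*G / f_*Z(G) \xrightarrow{\sim} f_*\SInn(G)$.

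Combining these two inputs, the canonical projection $f_*G \twoheadrightarrow f_*G / Z(f_*G) = \SInn(f_*G)$ annihilates the subsheaf $f_*Z(G)$ by the first step, so it factors uniquely through $f_*G / f_*Z(G) \cong f_*\SInn(G)$. This factorisation is the desired natural morphism $f_* \colon f_*\SInn(G) \to \SInn(f_*G)$, and commutativity of the stated square is immediate from the construction. I do not expect a real obstacle here; the only point meriting a little care is appealing to the non-abelian five-term sequence for $G \to \SInn(G)$, but this is standard precisely because the kernel $Z(G)$ is abelian and central in $G$ (cf.\ \cite[Chap.\ IV]{Gir71}).
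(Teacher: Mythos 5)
Your proposal is correct and follows essentially the same route as the paper: both push forward the central extension $1 \to Z(G) \to G \to \SInn(G) \to 1$, use $R^1f_*Z(G)=0$ to identify $f_*\SInn(G)$ with $f_*G/f_*Z(G)$, and then factor the projection $f_*G \to \SInn(f_*G)$ through this quotient via the inclusion $f_*Z(G) \subseteq Z(f_*G)$. The paper merely compresses this into a two-row exact diagram; your write-up supplies the same diagram's content in words.
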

\begin{proof}
    Since $R^1f_*Z(G) = 0$, we have a diagram
    \[\begin{tikzcd}
        0 & {f_*Z(G)} & {f_*G} & {f_*\SInn(G)} & 1 \\
        0 & {Z(f_*G)} & {f_*G} & {\SInn(f_*G)} & 1
        \arrow[from=1-1, to=1-2]
        \arrow[from=1-2, to=1-3]
        \arrow[from=1-2, to=2-2]
        \arrow[from=1-3, to=1-4]
        \arrow[equals, from=1-3, to=2-3]
        \arrow[from=1-4, to=1-5]
        \arrow[from=2-1, to=2-2]
        \arrow[from=2-2, to=2-3]
        \arrow[from=2-3, to=2-4]
        \arrow[from=2-4, to=2-5]
    \end{tikzcd}\]
    with two lines exact. The assertion follows.
\end{proof}

\begin{lemm}\label{lemm: directImageOfOutIso}
    Let $f\colon E' \to E$ be a morphism of sites. Let $G$ be a group sheaf on $E'$, and let $Z(G)$ be its centre. Suppose that $R^1f_*Z(G) = 0$ and $R^1f_* \SInn(G) = 1$. Then there is a natural morphism
    \[
        f_*\colon f_*\SAut_{E'}(\band(G)) \to \SAut_{E}(\band(f_*G)),
    \]
    making the diagram
    \[\begin{tikzcd}
        {f_*\SAut_{E'}(G)} & {f_*\SAut_{E'}(\band(G))} \\
        {\SAut_{E}(f_*G)} & {\SAut_E(\band(f_*G))}
        \arrow[from=1-1, to=1-2]
        \arrow["{f_*}"', from=1-1, to=2-1]
        \arrow["{f_*}", from=1-2, to=2-2]
        \arrow[from=2-1, to=2-2]
    \end{tikzcd}\]
    commute.
\end{lemm}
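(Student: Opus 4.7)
The plan is to realise $\SAut_{E'}(\band(G))$ as the quotient $\SAut_{E'}(G)/\SInn(G)$ and to obtain the desired morphism as the map induced on this quotient from a morphism of short exact sequences. First I would apply $f_*$ to the short exact sequence
\[
1 \to \SInn(G) \to \SAut_{E'}(G) \to \SAut_{E'}(\band(G)) \to 1
\]
of group sheaves on $E'$. The hypothesis $R^1f_*\SInn(G) = 1$ forces the resulting sequence
\[
1 \to f_*\SInn(G) \to f_*\SAut_{E'}(G) \to f_*\SAut_{E'}(\band(G)) \to 1
\]
on $E$ to remain short exact. On the target side I have the tautological sequence $1 \to \SInn(f_*G) \to \SAut_E(f_*G) \to \SAut_E(\band(f_*G)) \to 1$. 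The previous lemma, which is applicable since $R^1f_*Z(G) = 0$, supplies a morphism $f_*\SInn(G) \to \SInn(f_*G)$; together with the adjunction unit $f_*\SAut_{E'}(G) \to \SAut_E(f_*G)$ these provide the left two vertical arrows of a ladder between the two short exact sequences.

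The crux of the argument is the commutativity of the leftmost square in this ladder. To check it I would use that $f_*G \twoheadrightarrow f_*\SInn(G)$ is surjective (which again uses $R^1f_*Z(G) = 0$, applied this time to $1 \to Z(G) \to G \to \SInn(G) \to 1$). Pre-composing each of the two parallel maps $f_*\SInn(G) \rightrightarrows \SAut_E(f_*G)$ with this surjection, both equal the conjugation map $f_*G \to \SAut_E(f_*G)$: one side via the defining square of the previous lemma followed by the inclusion $\SInn(f_*G) \hookrightarrow \SAut_E(f_*G)$, the other by applying $f_*$ to $G \to \SInn(G) \hookrightarrow \SAut_{E'}(G)$ and then the adjunction unit. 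By the epimorphism property, the two parallel maps themselves must agree.

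Once the leftmost square is known to commute, the dashed arrow $f_* \colon f_*\SAut_{E'}(\band(G)) \to \SAut_E(\band(f_*G))$ is forced by the universal property of the quotient at the top (equivalently, by a straightforward diagram chase, whose well-definedness follows from the commutativity just established together with exactness of the bottom row). The commutativity of the square in the statement is then automatic, since its horizontal arrows are precisely the quotient maps $\SAut(-) \to \SAut(\band(-))$ on each side, and the left-hand vertical $f_*\SAut_{E'}(G) \to \SAut_E(f_*G)$ is, by construction, the one inducing the new $f_*$ on quotients. The main obstacle is the commutativity of the leftmost square; everything else is formal, but that single step uses both hypotheses in an essential way, one to produce the comparison $f_*\SInn(G) \to \SInn(f_*G)$ and the other to ensure the top row is short exact.
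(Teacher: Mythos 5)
Your proposal is correct and follows essentially the same route as the paper: identify $\SAut_{E'}(\band(G))$ with $\SAut_{E'}(G)/\SInn(G)$, use $R^1f_*\SInn(G)=1$ to keep the pushed-forward row exact, and induce the map on quotients from the ladder whose left column is the comparison $f_*\SInn(G)\to\SInn(f_*G)$ of the previous lemma. Your explicit verification of the leftmost square via the epimorphism $f_*G\twoheadrightarrow f_*\SInn(G)$ is a detail the paper leaves implicit, but it is the right justification.
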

\begin{proof}
    By \cite[IV, corollaire 1.1.7.3]{Gir71}, the map $\SAut_{E'}(G) \to \SAut_{E'}(\band(G))$ induces an isomorphism $\SAut_{E'}(\band(G)) \cong \SOut_{E'}(G) \cong \SAut_{E'}(G)/\SInn(G)$. Since $\SInn(G)$ acts freely on $\SAut_{E'}(G)$, we have a commutative diagram
    \[\begin{tikzcd}
        1 & {f_*\SInn(G)} & {f_*\SAut_{E'}(G)} & {f_*\SAut_{E'}(\band(G))} & 1 \\
        1 & {\SInn(f_*G)} & {\SAut_{E}(f_*G)} & {\SAut_{E}(\band(f_*G))} & 1
        \arrow[from=1-1, to=1-2]
        \arrow[from=1-2, to=1-3]
        \arrow["{f_*}", from=1-2, to=2-2]
        \arrow[from=1-3, to=1-4]
        \arrow["{f_*}", from=1-3, to=2-3]
        \arrow[from=1-4, to=1-5]
        \arrow[from=2-1, to=2-2]
        \arrow[from=2-2, to=2-3]
        \arrow[from=2-3, to=2-4]
        \arrow[from=2-4, to=2-5]
    \end{tikzcd}\]
    with two lines exact. The assertion follows.
\end{proof}

\begin{rema}\label{rema: conditionsOfCentreIsEnough}
    The conditions of Lemma \ref{lemm: directImageOfOutIso} is satisfied when $R^1f_* Z(G) = 0$, $R^1f_* G = 1$ and $R^2f_* Z(G) = 0$.
\end{rema}

\subsection{Comparison of $\Gerb^E(E';G)$ with $\Gerb(E;f_*G)$}
Let $f\colon E' \to E$ be a morphism of sites. Let $G$ be a group sheaf on $E'$, and let $Z(G)$ be its centre. We construct an equivalence between $\Gerb^E(E';G)$ and $\Gerb(E;f_*G)$ under certain conditions.

\begin{prop}
    Suppose that $R^1f_* G = 1$, $R^1f_*Z(G) = 0$ and $R^1f_* \SInn(G) = 1$. Then for any $\cG \in \Gerb^E(E';G)$, the stack $f_*\cG$ is an $f_*G$-gerbe on $E$.
\end{prop}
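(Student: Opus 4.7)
The plan is to show that $f_*\cG$ is locally on $E$ equivalent to $\bbB(f_*G)$, and that these local equivalences assemble into a global $f_*G$-gerbe structure. I would proceed in three steps.

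First, I would establish the key auxiliary fact that, under the hypothesis $R^1f_*G = 1$, the natural morphism of stacks on $E$
\[
    \bbB(f_*G) \longrightarrow f_*\bbB G,
\]
sending an $f_*G$-torsor $T$ on $V \in E$ to $f^{-1}T \wedge^{f^{-1}f_*G} G$ on $f^{-1}(V)$, is an equivalence. Fully faithfulness reduces, locally on $V$, to the identity on $f_*G$ at the level of automorphism groups of trivial torsors (and extends to arbitrary torsors since both Hom-sheaves become torsors under $f_*G$ on a cover). Local essential surjectivity is precisely the non-abelian vanishing $R^1f_*G = 1$.

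Second, by the definition of $\Gerb^E(E';G)$, I pick a covering family $\{U_i\}$ in $E$ on which $\cG|_{f^{-1}(U_i)} \simeq \bbB G|_{f^{-1}(U_i)}$. Applying $f_*$ and the first step restricted to each $U_i$, I obtain
\[
    f_*\cG|_{U_i} \;\simeq\; f_*\bbB G|_{U_i} \;\simeq\; \bbB(f_*G)|_{U_i}.
\]
Hence $f_*\cG$ is a stack in groupoids on $E$ that is locally equivalent to $\bbB(f_*G)$; in particular it is locally non-empty and locally connected, so it is a gerbe on $E$.

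Third, to promote this to the statement that $f_*\cG$ is an $f_*G$-gerbe (i.e.\ banded by $f_*G$), I would invoke Lemma \ref{lemm: directImageOfOutIso}. The local trivializations of $\cG$ over $f^{-1}(U_i)$ come with transition data on the overlaps valued in $\SAut_{E'}(\band(G))$; applying $f_*$ and the morphism
\[
    f_*\SAut_{E'}(\band(G)) \longrightarrow \SAut_E(\band(f_*G))
\]
from the lemma produces transition data for a candidate $f_*G$-gerbe on $E$. The commutative square in Lemma \ref{lemm: directImageOfOutIso} guarantees that this transported data is compatible with the local stack equivalences $f_*\cG|_{U_i} \simeq \bbB(f_*G)|_{U_i}$ obtained in step two, so the local $f_*G$-gerbe structures glue to a genuine $f_*G$-gerbe structure on $f_*\cG$.

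The main obstacle I anticipate is this third step: a local equivalence of stacks $f_*\cG|_{U_i} \simeq \bbB(f_*G)|_{U_i}$ does not automatically endow $f_*\cG$ with an $f_*G$-gerbe structure, because the band is a global object and the local choices must be coherent on overlaps. This is exactly why the stronger hypotheses $R^1f_*Z(G) = 0$ and $R^1f_*\SInn(G) = 1$ are needed beyond $R^1f_*G = 1$: they furnish, through Lemma \ref{lemm: directImageOfOutIso}, the direct image morphism on outer automorphism sheaves that transports $G$-band transition data on $E'$ to $f_*G$-band transition data on $E$.
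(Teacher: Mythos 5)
Your proposal is correct and follows the same two main inputs as the paper: the equivalence $f_*\bbB G \xrightarrow{\sim} \bbB f_*G$ coming from $R^1f_*G = 1$ (giving that $f_*\cG$ is locally non-empty and locally connected, hence a gerbe), and Lemma \ref{lemm: directImageOfOutIso} (giving the band structure). The one place where you diverge is the third step, and it is worth noting that the paper's formulation sidesteps the very obstacle you flag. You describe the $f_*G$-banding via transition data on overlaps valued in $\SAut_{E'}(\band(G))$ and then argue that the locally transported structures glue; this works, but the coherence check is exactly the delicate point you identify. The paper instead uses Giraud's definition of a $G$-gerbe directly: the structure is the datum, for every local section $x \in \cG(S')$, of a band isomorphism $a(x)\colon \band(G)|_{S'} \to \band(\SAut(x))$ compatible with restriction. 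Since local sections of $f_*\cG$ over $S$ are precisely sections of $\cG$ over $f^{-1}(S)$, one simply applies $f_*$ to each $a(x)$ and invokes Lemma \ref{lemm: directImageOfOutIso} to land in $\band(f_*\SAut(x)) \cong \band(\SAut(x))$ as a band on $E$; compatibility with restrictions is inherited for free, and no gluing of locally chosen trivializations is needed. Your route is sound if the coherence argument is spelled out, but the intrinsic formulation is both shorter and avoids the choice of a trivializing cover at this stage.
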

\begin{proof}
    The condition $R^1f_* G = 1$ implies that we have an equivalence
    \[
        f_*\bbB G \xrightarrow{\sim} \bbB f_*G.
    \]
    We first show that $f_*\cG$ is a gerbe on $E$. Clearly $f_*\cG$ is locally non-empty by definition. To show $f_*\cG$ is locally connected, we may assume $\cG = \bbB G$ since the question is local on $E$. The assertion then follows from the equivalence $f_*\bbB G \xrightarrow{\sim} \bbB f_*G$.

    We now prove $f_*\cG$ is an $f_*G$-gerbe. By definition, there is an action $a$ of $G$ on $\cG$ such that for any local section $x \in \cG(S')$, $S' \in E'$ the map
    \[
        a(x)\colon \band(G)|_{S'} \to \band(\SAut(x))
    \]
    is an isomorphism of bands (see \cite[IV, définition 2.2.2]{Gir71}). For any $S\in E$ and $x \in \cG(f^{-1}(S))$, since $\SAut(x) \cong G|_{f^{-1}(S)} = f_*G(S)$, by Lemma \ref{lemm: directImageOfOutIso}, we can take direct image to obtain isomorphisms
    \[
        f_*a(x)\colon \band(f_*G)|_S \xrightarrow{\sim} \band(f_*\SAut(x)),
    \]
    which are compatible with restrictions. This gives an $f_*G$-gerbe structure on $f_*\cG$.
\end{proof}

\begin{theo}\label{theo: comparisonOfGerbes}
    Assume that $R^1f_*G = 1$, $R^1f_*Z(G) = 0$, and $R^1f_*\SInn(G) = 1$. Suppose further that the subgroup $f_*Z(G)$ is the centre of $f_*G$. Then the functor $f_*$ induces an equivalence $\Gerb^E(E';G)\xrightarrow{\sim} \Gerb(E;G)$.
\end{theo}
\begin{proof}
    We first show that $f_*$ is fully faithful. For any $\cF, \cG \in \Gerb^E(E';G)$, we need to show that the functor $f_*$ induces an equivalence
    \[
        f_*\SHom_{G}(\cF,\cG) \to \SHom_{f_*G}(f_*\cF, f_*\cG)
    \]
    of stacks. The question is local on $E$, we may assume $\cF \cong \bbB G$ and $\cG \cong \bbB G$ by the definition of $\Gerb^E(E';G)$. By \cite[IV, théorème 2.3.2, (iii)]{Gir71}, $\SHom_{G}(\cF,\cG)$ (\resp $\SHom_{f_*G}(f_*\cF, f_*\cG)$) is a $Z(G)$-gerbe (\resp a $f_*Z(G)$-gerbe). We have isomorphisms (\cite[III, Corollaire 2.2.6]{Gir71})
    \[
        \SHom_{G}(\bbB G,\bbB G) \xrightarrow[\sim]{i} \bbB Z(G), \quad f \mapsto \SIsom(f,\mathrm{id}),
    \]
    and
    \[
        \SHom_{G}(\bbB f_*G,\bbB f_*G) \xrightarrow[\sim]{j} \bbB f_*Z(G), \quad f \mapsto \SIsom(f,\mathrm{id}).
    \]

    We will construct a $2$-morphism in the following diagram.
    \[\begin{tikzcd}
        {f_*\SHom_{G}(\bbB G,\bbB G)} & {f_*\bbB Z(G)} \\
        {\SHom_{f_*G}(\bbB f_*G,\bbB f_*G)} & {\bbB f_*Z(G)}
        \arrow["f_*i", from=1-1, to=1-2]
        \arrow["{f_*}", from=1-1, to=2-1]
        \arrow["a"', between={0.4}{0.6}, Rightarrow, from=1-2, to=2-1]
        \arrow["{f_*}", from=1-2, to=2-2]
        \arrow["j", from=2-1, to=2-2]
    \end{tikzcd}\]
    Then $a$ is an isomorphism since $\bbB f_*Z(G)$ is a groupoid. Since $f_*i$, $j$ are equivalences, and $f_*\colon f_*\bbB Z(G) \to \bbB f_*Z(G)$ is an equivalence by vanishing of $R^1f_* Z(G)$. We deduce that the left vertical map is also an equivalence.

    To provide such a $2$-morphism. For any local morphism $g\colon \bbB G \to \bbB G$, we let
    \[
        a(g)\colon f_*\SIsom(g, \mathrm{id}) \to \SIsom(f_*g,\mathrm{id})
    \]
    be the map of $f_*Z(G)$-torsors induced by $f_*$. This proves that $f_*\colon \Gerb^E(E';G) \to \Gerb(E;G)$ is fully faithful.

    To show $f_*$ is an equivalence, we let $\SGerb^E(E';G)$ (\resp $\SGerb(E;G)$) be the $\infty$-sheaf on $E$ such that for any $S \in E$, $\SGerb^E(E'; G)(S) = \Gerb^{E_{/S}}(E'_{/f^{-1}(S)}; G)$ (\resp $\SGerb(E;G)(S) = \Gerb(E_{/S};G)$). We have a functor
    \[
        f_*\colon \SGerb^E(E';G) \to \SGerb(E;G)
    \]
    that is fully faithful and \emph{locally essentially surjective}, namely, for any $\cF \in \SGerb(E;G)(S)$, $\cF$ is locally isomorphic to $\bbB f_*G = f_*\bbB G$. The theorem then follows by descent.
\end{proof}



\subsection{$G$-gerbes on perfectoid spaces}
Let $K$ be a complete non-archimedean field over $\bbQ_p$, $G$ be a rigid group over $K$. Let $X$ be a perfectoid space over $K$. There is a morphism of sites $\nu\colon X_v \to X_\et$. We let $G_v$, $G_\et$ be as in \ref{ssec: centreCompatibilityEtaleAndV}. Heuer proved in \cite{Heu22} that $R^1\nu_* G_v = 1$, and $R\nu^i_* G_v = 0$ for $i \ge 1$ if $G$ is further a commutative rigid group.

\begin{theo}
    \begin{enumerate}[(1)]
        \item Any $G_v$-gerbe on $X_v$ is \'etale locally trivial.
        \item The functor $\nu_*$ induces an equivalence $\Gerb(X_v; G_v) \xrightarrow{\sim} \Gerb(X_\et,G_\et)$.
    \end{enumerate}
\end{theo}
\begin{proof}
    Let $Z$ be the centre of the rigid group $G$. By Corollary \ref{coro: vCentreOfGv} and Proposition \ref{prop: etCentreOfGet}, $Z_v$ (\resp $Z_\et$) is the centre of group sheaf $G_v$ (\resp $G_\et$). By \cite[Theorem 1.1]{Heu22}, we have $R\nu_* Z_v = Z_\et$ and $R^1\nu_* G_v = 1$. The assertion (1) then follows from Giraud's result (Proposition \ref{prop: GiraudResultOfCentre}), and (2) follows from Remark \ref{rema: conditionsOfCentreIsEnough} and Theorem \ref{theo: comparisonOfGerbes}.
\end{proof}

\begin{rema}\label{rema: perfectoidAssumtionNecessary}
    The assumption that $X$ is a perfectoid space is necessary for our theorems.
    \begin{enumerate}[(1)]
        \item If $X$ is a smooth rigid space  over $K$, with $\Char K = 0$. Heuer shows in \cite[Theorem 1.8]{Heuer_LineBundles} that $\nu_* \bbG_{m,v} = \bbG_{m,\et}$ and that
            \[
                R^i\nu_* \bbG_{m,v} = \Omega^i_{X_{\et}}\{-i\}, \quad \text{for $i \ge 1$}.
            \]
        \item More generally, \cite[Theorem 1.1]{Gerth2024} describes $H^2_v(X,G)$ for certain $X$ and locally $p$-divisible commutative rigid groups $G$.
    \end{enumerate}
    Therefore, $H^2_v(X,G)$ is quite different from $H^2_{\et}(X,G)$ for general $X$ even in the abelian cases.
\end{rema}


\bibliographystyle{amsalpha}
\bibliography{refs}

\end{document}